\documentclass[letterpaper, 10 pt, conference]{ieeeconf}

\IEEEoverridecommandlockouts                              

\usepackage{amssymb,amsmath,amsfonts,mathtools}

\usepackage{algorithmic}
\usepackage{subfig}
\usepackage[colorlinks=true]{hyperref}
\usepackage{graphicx}
\usepackage{textcomp}
\usepackage{mathtools}
\usepackage[colorlinks=true]{hyperref}
\usepackage{algorithm}

\usepackage{calrsfs}

\newtheorem{theorem}{Theorem}[section]
\newtheorem{lemma}[theorem]{Lemma}
\newtheorem{definition}[theorem]{Definition}

\newtheorem{problem}[theorem]{Problem}

\newtheorem{remark}[theorem]{Remark}

\newtheorem{assumptions}[theorem]{Assumptions}

\DeclareMathAlphabet{\pazocal}{OMS}{zplm}{m}{n}

\renewcommand{\theequation}{\thesection.\arabic{equation}}
\renewcommand{\thefigure}{\thesection.\arabic{figure}}
\renewcommand{\thetable}{\thesection.\arabic{table}}

\newcommand{\sr}{\stackrel}

\newcommand{\rar}{\rightarrow}

\newcommand{\tri}{\sr{\triangle}{=}}

\newcommand{\bea}{\begin{eqnarray}}
\newcommand{\eea}{\end{eqnarray}}
\newcommand{\bes}{\begin{eqnarray*}}
\newcommand{\ees}{\end{eqnarray*}}
\newcommand{\bce}{\begin{center}}
\newcommand{\ece}{\end{center}}

\def\VR{\kern-\arraycolsep\strut\vrule &\kern-\arraycolsep}
\def\vr{\kern-\arraycolsep & \kern-\arraycolsep}

\newcommand{\ben}{\begin{enumerate}}
\newcommand{\een}{\end{enumerate}}

\newcommand{\bi}{\begin{itemize}}
\newcommand{\ei}{\end{itemize}}

\newcommand{\bp}{\begin{problem}}
\newcommand{\ep}{\end{problem}}
\newcommand{\hso}{\hspace{.1in}}
\newcommand{\hst}{\hspace{.2in}}

\newcommand{\bc}{\begin{center}}
\newcommand{\ec}{\end{center}}

\begin{document}
\title{\LARGE \bf  Comments and Corrections on  the DP Equations of Paper ``On Team Decision Problems With Nonclassical
Information Structures''
}

\author{Charalambos D. Charalambous$^1$, Umarbek Guvercin$^2$,  Seddik   Djouadi$^2$ 
\thanks{$^{1}$Charalambos D. Charalambous is  with the Faculty of Electrical and Computer Engineering, University of Cyprus, Nicosia 1678, Cyprus
{\tt\small chadcha@ucy.ac.cy}}
\thanks{$^{2}$Seddik Djouadi and  Umarbek Guvercin are  with the Faculty of Electrical Engineering and Computer Science, University of Tennessee, Knoxville, TN, 37996, USA 
{\tt\small \{mdjouadi,uguvercin\}@utk.edu}}%
\thanks{}%
}

%

\maketitle

\begin{abstract}
The 2023 paper   ``On Team Decision Problems With Nonclassical
Information Structures'' \cite{malikopoulosIEEEAC2023} presented information states and dynamic programming (DP) equations for  delayed sharing information patterns,  based on the concept of person-by-person (PbP) optimality of static team theory. In particular, \cite[Section~IV]  {malikopoulosIEEEAC2023}, Theorem~5 presents   recursions of the  information states  and Theorem~7, eqn(57), eqn(58),    presents   DP equations,  of each  team member. 

In this note  we show that the proof of  Theorem~5 and the DP eqn(57), eqn(58)  of Theorem~7 of  \cite{malikopoulosIEEEAC2023} are incorrect.  Consequently, Lemma~8,  Theorem~6, and  the full statement of Theorem~7  in  \cite{malikopoulosIEEEAC2023} are invalid, because their proofs  rely on erroneous   information states and the optimality of separated  strategies (i.e., functions of the information states). 

We further provide the  correct  DP equations  for  PbP optimality, thereby highlighting  
the subtleties and challenges inherent in the analysis of  delayed sharing information patterns.

\end{abstract}

\section{INTRODUCTION}
\label{sec:int}
Decentralized partially observable Markov decision problems (POMDPs) with delayed sharing patterns assigned to multiple controls or agents are introduced by Witsenhausen in the 1971  seminal paper \cite{witsenhausen1971}. Early studies  \cite{sandell-athans1974,yoshikawa1975,varaiya-walrand1978} derived a single dynamic programming  (DP) equation based on   a single cost-to-go  conditioned  on  the common or shared  information available to all agents 
by invoking  \cite[Assertion~8]{witsenhausen1971}, while  more recent studies  \cite{nayyar-mahajan-teneketzis2013,nayyar-teneketzis2019},  developed  several variations.  


The 2023 paper [1], presented  information states and    DP  equations  based on the concept of person-by-person (PbP) optimality of static team theory  \cite{marschak-radner1972,radner1962}, leading to multiple DP equations,  Theorems 5 and 7 of [1]. This note identifies gaps in the statements and proofs of these two theorems and proposes a new DP approach that corrects  Theorems 5 and 7 of [1]. The new DP approach is  an initial step toward a complete theory of PbP optimality, in a spirit reminiscent of the classical centralized DP framework for  POMDPs \cite{kumar-varayia:B1986,bertsekas2005}.


\subsection{Problem Formulation and Contributions}
We consider the problem formulation  in \cite{malikopoulosIEEEAC2023}, and  we follow the notation therein,  with minor variations when needed, to enhance the clarity of this presentation. 

The POMDP is described by the  state dynamics and observations of the system  given by 
 \begin{align}
&X_{t+1}=f_t(X_t, U_t^{(K)}, W_t), \;  t=0,1, \ldots,  T-1,   \label{NDM-3}\\
& Y_{t}^k   = h_t^k(X_t, Z_t^k),\;  t=0, \ldots, T, \;    k=1, \ldots,  K \label{NDM-4} 
\end{align}
 $X_t(\cdot): (\Omega, {\cal F}) \rar ({\mathbb  X}_t, {\cal X}_t)$  is the state, \\
 $Y_t^k(\cdot): (\Omega, {\cal F}) \rar ({\mathbb  Y}_t^k, {\cal Y}_t^k)$ is the $k$ observation,\\ $U_t^k(\cdot): (\Omega, {\cal F}) \rar ({\mathbb  U}_t, {\cal U}_t^k)$ is the $k$ control, $U_t^{(K)}=\{U_t^1, \ldots, U_t^K\}$, \\
$W_t(\cdot): (\Omega, {\cal F}) \rar ({\mathbb  W}_t, {\cal W}_t)$, $Z_t^k(\cdot): (\Omega, {\cal F}) \rar ({\mathbb  Z}^k_t, {\cal Z}_t^k)$, $\forall (t,k)$ are mutually independent  exogenous noises, which are independent of the initial state $X_0: (\Omega, {\cal F}) \rar ({\mathbb  X}_0, {\cal X}_0)$. \\
For each $(t, k)\in \{0,\ldots, T-1\} \times \{1, \ldots, K\}$ control input $U_t^k$ is assigned the $n-$step delayed sharing information pattern   denoted by $I_t^k(\cdot): (\Omega, {\cal F}) \rar ({\mathbb I}_t^k, {\cal  I}_t^k)$, as follows \cite{witsenhausen1971}:
  \begin{align}
 &I_t^k \tri \big\{Y_{0,t}^k, U_{0,t-1}^k\big\}\cup \Delta_t^{-k}=  \Delta_t \cup \Lambda_t^k, \label{ISG-1}\\
 &    \Delta_t^k \tri    \big\{Y_{0, t-n}^k, U_{0,t-n}^k \big\}, \;  \Lambda_t^k \tri     \big\{Y_{t-n+1, t}^k, U_{t-n+1, t-1}^k \big\}, \nonumber \\
  &\Delta_t\tri \Delta_t^k\cup \Delta_t^{-k}, \;  \Delta_t^{-k} =\cup_{j=1, j\neq k}^K \Delta_t^j,\; \Lambda_t^{-k} =\cup_{j=1, j\neq k}^K \Lambda_t^j \nonumber 
\end{align}
where $n\in \{1, \ldots, T\}$ is the time delay of sharing $\Delta_t^{-k}$.  Here, $\Delta_t$ is the common or shared information component available to all controls,  $U_t^j, j=1, \ldots, K$, while $\Lambda_t^k$  is the private information component available only to control $U_t^k$. 
We use the convention $B_{k,n}=\{B_k, B_{k+1}, \ldots, B_n\}, 0\leq k \leq n$   
 and $X_{k,n}=\{\emptyset\}, \forall k>n$.\\
For each $k\in \{1, \ldots, K\}$,  the admissible strategies of the $k$ control are  measurable functions $g^k(\cdot)$, 
\begin{align}
U_t^k=g_t^k(I_t^k)=g_t^k( \Delta_t, \Lambda_t^k),  \;  t=0, \ldots, T-1.    \label{type_a}
\end{align}
We denote the set of such   strategies  $g^k(\cdot)=\{g_1^k(\cdot), \ldots, g_{T-1}^k(\cdot)\}$ by   ${\bf  U}_{0,T-1}^k$ and  their $K-$tuple by   ${g}^{(K)}\tri \{g^1, \ldots, g^K\}\in  {\bf U}_{0,T-1}^{(K)}\tri  \times_{k=1}^K  {\bf  U}_{0,T-1}^{k}$.

The cost associated with any  $g^{(K)}\in {\bf   U}_{0,T-1}^{(K)}$ is 
\begin{align}
&J_T (g^{(K)}) \tri {\mathbb E}^{g^{(K)}} \Big\{ \sum_{t=0}^{T-1} c_t(X_t, g_t^{(K)})+c_T(X_T) \Big\} \label{i8-cost_a}
\end{align}  
where is   $c( \cdot)$ lower semi-continuous  and bounded from below (see \cite{bertsekas-shreve1978}  for general assumptions).

This note restricts attention to  Assumptions~\ref{ass-1} (as in   \cite{malikopoulosIEEEAC2023}).

\ \

\begin{assumptions}(Absolute Continuity\footnote{The existence of PDFs  are  standard assumptions   in most    POMDPs  \cite{kumar-varayia:B1986}.})
\label{ass-1}
The system model conditional probability distributions satisfy, 
\begin{align*}
& {\bf P}_{t+1}\Big\{X_{t+1} \in dx_{t+1}\big|X_t, U_{t}^{(K)}\Big\} 
=S_{t+1}(x_{t+1}|X_t,  U_{t}^{(K)})dx_{t+1},  \\
&{\bf P}_{t}\Big\{Y_t^k \in dy_t^k \big|X_t\Big\}=Q_{t}^k(y_t^k|X_t)dy_{t}^k, \; \forall (t,k)
\end{align*} 
i.e., $S_{t+1}(\cdot|x_t,  u_{t}^{(K)}),  Q_{t}^k(\cdot|x_t)$ are   probability density functions (PDFs)  (see  \cite{CDC:ECC2025,CDC-GU-SD:CDC2026-ArXiv,bertsekas-shreve1978}  for generalizations).  
\end{assumptions}

\ \

The material of \cite[Section~IV]  {malikopoulosIEEEAC2023} is focused on the concept of PbP optimality  of  static team theory \cite{marschak-radner1972,radner1962}.


\ \

 \begin{definition}(Person-by-Person Optimality) \\
 \label{def-pbp}
 A $K-$tuple of strategies   $g^{(K),o}\tri   \{g^{1,o},  \ldots, g^{K,o}\} \in {\bf   U}_{0,T-1}^{(K)}$
  is called  {\it person-by-person  (PbP) optimal} if it satisfies  the following inequalities, $\forall k \in \{1,2, \ldots, K\}$:
\begin{align}
& {J}_{T}(g^{k,o}, g^{-k,o}) \leq  {J}_{T}(g^{k}, g^{-k,o}), \;
 \forall g^k \in {\bf  U}_{0,T-1}^{k}, \label{pbp}\\
&J_{T}(g^{k,o}, g^{-k,o}) \tri  \inf_{  g^{k}\in {\bf  U}_{0,T-1}^k}  {J}_{T}(g^{k}, g^{-k,o})  \label{opt-p-k}
\end{align}
where $
 g^{-k,o} \tri \big\{g^{1,o}, \ldots,   g^{k-1,o}, g^{k+1,o}, \ldots, g^{K,o} \big\}$,  ${J}_{T}(g^{k,o}, g^{-k,o})$ is the optimal payoff of  strategy $g^k \in {\bf  U}_{0,T-1}^{k}$, when all other strategies are fixed to    $g^{-k,o}\in {\bf  U}_{0,T-1}^{-k} \tri \times_{j=1, j \neq k}^K{\bf  U}_{0,T-1}^{j}$. 
  \end{definition}

\ \

Paper   \cite[Section~IV]{malikopoulosIEEEAC2023} presented   DP equations based on PbP.
 i.e., for  the cost-to-go of each strategy 
 $g^k \in {\bf   U}_{0,T-1}^{k}$, based on Definition~\ref{def-pbp}.
%
  \cite[Theorems~5]{malikopoulosIEEEAC2023} derived   information state recursions for each strategy $g^{k}\in {\bf U}_{0,T-1}^k, k=1, \ldots, K$.  The information states are  then applied to obtain the DP equations of the  strategies   \cite[Theorems~7, eqn(57), eqn(58)]{malikopoulosIEEEAC2023}.   Additional properties of PbP optimal strategies are stated in  Lemma~7,  Lemma~8, Corollary~1, and  Theorem~6 in  \cite{malikopoulosIEEEAC2023}.

 Since Bellman's principle of optimality \cite{bertsekas2005,kumar-varayia:B1986} is central to this note,  in Definition~\ref{def:ctg-nm} we introduce the cost-to-go  of strategy $g^{k}\in {\bf  U}_{0,T-1}^{k}$  based on Definition~\ref{def-pbp}.

 \ \

\begin{definition}(PbP Cost-to-Go)\\
\label{def:ctg-nm}
For each $k \in \{1, \ldots, K\}$, consider  the payoff $J_{T} (g^{k}, g^{-k,o})$ of  strategy $g^k \in {\bf  U}_{0,T-1}^k$  for fixed   strategies  $g^{-k}=g^{-k,o}\in {\bf  U}_{0,T-1}^{-k}$. 
The cost-to-go ${\cal V}_{t}^{g^{k,o},g^{-k,o}}(\cdot): {\mathbb I}_t^{k}\rar (-\infty,\infty]$, over  $\{t,t+1, \ldots, T\}$  of   $g^k \in {\bf U}_{0,T-1}^k$,  when  optimal  $g^k=g^{k,o}\in {\bf  U}_{0,t-1}^{k}$ is  used over $\{0,1,\ldots, t-1\}$, conditioned  on any realization  $I_t^k=\{\Delta_t, \Lambda_t^k\}=\{\delta_t, \lambda_t^k\}$
  is  defined  by 
\begin{align}
 & {\cal V}_{t}^{g^{k,o},g^{-k,o}}(\delta_t, \lambda_t^k) \tri   \inf_{ g^{k}\in {\bf U}_{t,T-1}^k}  {J}_{t,T}^{\gamma^{k},\gamma^{-k,o}}(\delta_t, \lambda_t^k),   \;    \forall t 
  \label{opt-ctg-g}\\
&{J}_{t,T}^{g^{k},g^{-k,o}}(\delta_t, \lambda_t^k) 
 \tri   {\mathbb E}^{^{g^k, g^{-k,o}}} \Big\{\sum_{j=t}^{T-1} c_j(X_j,g_j^k,  g_j^{-k,o})  \nonumber \\
 &+ c_T(X_T) \Big|\Delta_t=\delta_t, \Lambda_t^k=\lambda_t^k   \Big\}, \;  t=0, \ldots, T, \hso \forall  k \label{opt-ctg-pbp-tc} 
\end{align}
$g_j^k\equiv g_j^k(\Delta_j, \Lambda_j^k)$, $g_j^{-k}\equiv \big\{g_j^1(\Delta_j, \Lambda_j^1), \ldots, g_j^{k-1}(\Delta_j, \Lambda_j^{k-1}),$ $g_j^{k+1}(\Delta_j, \Lambda_j^{k+1}),   \ldots, g_j^K(\Delta_j, \Lambda_j^K)\big\}$. For simplicity, we use the  notation $g_j^{-k}\equiv g_j^{-k}(\Delta_j, \Lambda_j^{-k})$. 
\end{definition}

\ \

Our main contributions  are  the following. 

\begin{enumerate}
\item We  provide proofs  that  the information states presented  in   Theorem 5 and the DP equations presented   in Theorem~7, eqn(57), eqn(58) of  \cite{malikopoulosIEEEAC2023} are erroneous.

\item   We present new DP equations  for PbP,  corresponding to   the cost-to-go  ${\cal V}_{t}^{g^{k,o},g^{-k,o}}(\delta_t, \lambda_t^k), k=1, \ldots, K$ (using the  recent  developments in  \cite{CDC:ECC2025,CDC-GU-SD:CDC2026-ArXiv}).

\end{enumerate}

We address item 1) by re-visiting the statement and proofs of Theorem 5 and  Theorem~7  in  \cite{malikopoulosIEEEAC2023}. Our findings include (a) inconsistencies and (b) gaps in the proofs of these basic Theorems. An  essential technical issue is that the proof of
Theorem 5 in \cite{malikopoulosIEEEAC2023} made use of a  conditional
independence condition (see  (\ref{d-6})), which does  not  hold. 

We address item 2) from first principles; the detailed derivations  are found in \cite{CDC:ECC2025,CDC-GU-SD:CDC2026-ArXiv}. 

The remainder of this note is organized as follows.

Section~\ref{sect:inc-mal} identifies inconsistencies and errors in the statements of Theorem 5, concerning the proposed information states, and Theorem 7, concerning the DP eqn(57) and eqn(58), in \cite{malikopoulosIEEEAC2023}.


Section~\ref{section:new} presents the new DP equations for PbP optimality  (based on  \cite{CDC:ECC2025,CDC-GU-SD:CDC2026-ArXiv}). 

Section~\ref{section:gaps}  examines the proof of Theorem 5 in \cite{malikopoulosIEEEAC2023} and highlights the  error in the derivation. 

Section~\ref{sect:inc-mal}, Section~\ref{section:new} and Section~\ref{section:gaps},  are  self-contained and may be read independently, and not necessarily in the order in which they appear.

\subsection{Inconsistencies in  the Information States and DP Equations of  \cite[Theorem~5, Theorem~7]{malikopoulosIEEEAC2023}}
\label{sect:inc-mal}
First, we reproduce\footnote{Text quoted from \cite{malikopoulosIEEEAC2023} is codified using  {\it  Italic characters}.} Theorem 5 and Theorem~7 in  \cite{malikopoulosIEEEAC2023}. 

\ \

{\bf \cite[Theorems~5] {malikopoulosIEEEAC2023}} {\it (Information State-Team Members):   For any strategy ${ g}^{(K)}=\{g^1, \ldots, g^K\} \in {\bf  U}_{0,T-1}^{(K)}$ of the team, the conditional probability density  ${\bf P}_t(X_t|\Delta_t, \Lambda_t^k)$ does not depend on the control strategy ${ g}^k$ of member $k$. It depends only on the strategy ${g}^{-k}=(g^1, \ldots, g^{k-1}, g^{k+1}, \ldots, g^K)$ of the other team members. It is an information state of the team member $k$, i.e., 
\begin{align}
\Pi_t^k(\Delta_t, \Lambda_t^k)(X_t)= {\bf P}_t^{{g}^{-k}}(X_t|\Delta_t, \Lambda_t^k), \hso \forall (t,k) \label{Mal_49-a}
\end{align}
 that can be evaluated from $\Delta_t$ and $\Lambda_t^k$. Moreover, there is a function $\theta^k$, which does not depend on the strategy $g^k$ of member $k$, such that
\begin{align}
& \Pi_{t+1}^k(\Delta_{t+1}, \Lambda_{t+1}^k)(X_{t+1})\nonumber \\
&=\theta_t^k\big(\Pi_{t}^k(\Delta_{t}, \Lambda_{t}^k)(X_t), \Delta_{t+1}, \Lambda_{t+1}^k\big) \label{Mal_49}
\end{align}
for all $t=0,1, \ldots, T-1$. }

\ \

We reproduce  \cite[Theorems~7] {malikopoulosIEEEAC2023}, where  (\ref{Mal_57}), (\ref{Mal_58}),  are  the DP equations based on PbP optimality \cite[eqn(57), eqn(58)] {malikopoulosIEEEAC2023}. 

\ \

{\bf \cite[Theorems~7] {malikopoulosIEEEAC2023}} {\it Let\footnote{$\Pi_t(\Delta_t, \Lambda_t^{(K)})-{\bf P}_t^{g^{(K)}}(X_t\big|\Delta_t, \Lambda_t^{(K)})$ is the  centralized information state and   ${\bf U}_{0,T-1}^{(K),sep}$ is the set of separated strategies $U_t^{k}=g_t^{sep}(\Pi_t(\Delta_t, \Lambda_t^{(K)}), k=1, \ldots,K$, both  introduced in  \cite[Definition~2, Section~IV.B, first paragraph] {malikopoulosIEEEAC2023}.} $V_t(\Pi_t(\Delta_t, \Lambda_t^{(K)})$
be functions defined recursively by 
\begin{align}
&V_T(\Pi_T(\Delta_T, \Lambda_T^{(K)}))= {\mathbb E}^{{ g}^{(K)}} \Big[ c_N(X_T) \Big| \Pi_T=\pi_T\Big],    \label{Mal_55}\\
& V_t(\Pi_t(\Delta_t, \Lambda_t^{(K)}))=\inf_{u_t^{(K)} \in \times_{k=1}^K{\mathbb U}_t^k} 
{\mathbb E}^{{ g}^{(K)}} \Big[ c_t(X_t, U_t^{(K)})  \nonumber \\
& + V_{t+1}(\Pi_{t+1}(\Delta_{t+1}, \Lambda_{t+1}^{(K)}) ) \Big| \Pi_t=\pi_t,  U_t^{(K)}=u_t^{(K)}\Big] \label{Mal_56}
\end{align}
where $\pi_t, u_t^{(K)}$ are realizations of $\Pi_t$ and $U_t^{(K)}$, respectively, and let $g^{(K),o} \in {\bf U}_{0,T-1}^{(K),sep}$  be the manager's optimal separated control strategy which achieves the infimum in (\ref{Mal_55})-(\ref{Mal_56}) for all $t=0, \ldots, T-1$. Let $g^{(K)}=(g^1, \ldots, g^{k-1}, g^k, g^{k+1}, \ldots, g^K)$ be the team's strategy, where $g^k=\{g_0^k \ldots, g_{T-1}^K\}$ is a separated control strategy of member $k \in {\cal K}\tri \{1, \ldots, K\}$ such that $U_t^k=g_t^k(\Pi_t^k(\Delta_t, \Lambda_t^k))$. 
 Let $V_t^k(\Pi_t^k(\Delta_t, \Lambda_t^k))$ be functions defined recursively by each team player $k \in \{1,2, \ldots, K\}$ after fixing ${ g}^{-k}=(g^1,\ldots, g^{k-1}, g^{k+1}, \ldots, g^k)$, by\footnote{Although, in  (\ref{Mal_57}), (\ref{Mal_58}), the author uses  ${\mathbb E}^{{g}^k}$ instead of   ${\mathbb E}^{g^{-k}}$ this is clearly a typographical error.}
\begin{align}
&V_T^k(\Pi_T^k(\Delta_T, \Lambda_T^k))= {\mathbb E}^{{\bf g}^{-k}} \Big[ c_N(X_T) \Big| \Pi_T^k=\pi_T^k\Big],    \label{Mal_57}\\
& V_t^k(\Pi_t^k(\Delta_t, \Lambda_t^k))=\inf_{u_t^k \in {\mathbb U}_t^k} 
{\mathbb E}^{{\bf g}^{-k}} \Big[ c_t(X_t, U_t^k, U_t^{-k})  \nonumber \\
& + V_{t+1}^k(\Pi_{t+1}^k(\Delta_{t+1}, \Lambda_{t+1}^k) ) \Big| \Pi_t^k=\pi_t^k, \Delta_t=\delta_t,\nonumber \\
&  \Lambda_t^k=\lambda_t^k, U_t^k=u_t^k\Big], \hso \forall t =0,\ldots, T-1,\forall k \label{Mal_58}
\end{align}
where $U^{-k}=(U_t^1, \ldots, U_t^{k-1}, U_t^{k+1}, \ldots, U_t^K)$, and $\pi_t^k, \delta_t$, $\lambda_t^k, u_t^k$ are realizations 
of  $\Pi_t^k, \Delta_t, \Lambda_t^k, U_t^k$, respectively.\\
Then the solution of the manager in (\ref{Mal_55}), (\ref{Mal_56}) is the same as the solution derived by each player $k$ in (\ref{Mal_57}), (\ref{Mal_58}) for all $t=0, 1, \ldots, T-1$.  }

\ \

We show below under 1)-3) that the statement of \cite[Theorems~5] {malikopoulosIEEEAC2023} contains  flaws  and inconsistencies.

\begin{enumerate}
\item First, we show that solutions to the recursion (\ref{Mal_49}) are  not Markovian (due to the dependence of $\theta_t^k(\Pi_t^k,\cdot, \cdot)$ explicitly on $(\Delta_{t+1}, \Lambda_{t+1}^k)$)    and consequently such solutions cannot be  candidates of  information states for strategies $g^k(\cdot)$, contradicting the statement,  \cite[Section~IV.B, first lines] {malikopoulosIEEEAC2023}:  {\it ``In view of Theorem~5, we show that the optimal separated control strategy $g^k = \{g_1^k, \ldots, g_{T-1}^K\}$, i.e., $U_t^k = g_t^k (\Pi_t^k(\Delta_t,\Lambda_t^k))$,
derived by each team member $k\in \{1,2, \ldots, K\}$  yields the same solution
as the one by the manager's optimal separated control strategies...''}. \\
To verify our claim that the proposed information state is non-Markovian, we examine the  conditional probability:
\begin{align}
&Prob\Big\{\Pi_{t+1}^k\Big| \Delta_{t}, \Lambda_{t}^k, U_t^k\Big\}\nonumber \\
&\sr{(a)}{=}Prob\Big\{\theta_t^k\big(\Pi_{t}^k, \Delta_{t+1}, \Lambda_{t+1}^k\big) \Big|  \Pi_t^k, \Delta_t, \Lambda_t^k, U_t^k\Big\}\nonumber \\
&\sr{(b)}{=}Prob\Big\{\theta_t^k\big(\Pi_{t}^k, \Delta_{t}, \Lambda_t^k, Y_{t+1}^k, U_t^k, Y_{t-T+1}^{-k}, U_{t-T+1}^{-k}\big)\nonumber \\
&\: \Big| \Pi_t^k,  \Delta_t, \Lambda_t^k, U_t^k \Big\}\label{inc_2} \\
&\neq Prob\Big\{\Pi_{t+1}^k\Big|\Pi_t^k,   U_t^k\Big\}\label{inc_1}
\end{align}
where $(a)$  follows from the recursion (\ref{Mal_49}) and the fact that $\Pi_t(\Delta_t, \Lambda_t^k)$ and $U_t^k=g_t^k(\Delta_t, \Lambda_t^k)$  are  measurable functions of $\{\Delta_t, \Lambda_t^k\}$, $(b)$ follows  from   the identity,  
\begin{align}
&\{\Delta_{t+1}, \Lambda_{t+1}^k\}\nonumber \\
&= \{\Delta_{t}, \Lambda_{t}^k, Y_{t+1}^k, U_t^k, Y_{t-T+1}^{-k}, U_{t-T+1}^{-k}\}
\end{align}
and (\ref{inc_1}) is obvious,  because the conditioning in (\ref{inc_2})  with respect to (w.r.t.) $ \{\Delta_{t}, \Lambda_{t}^k\}$ cannot be removed, i.e., the conditional probability distribution  depends on $ \{\Delta_{t}, \Lambda_{t}^k\}$ not only through $\Pi_t^k$ but also through the operator $\theta_t^k(\Pi_t^k, \cdot)$ dependence on the data $(\Delta_{t+1}, \Lambda_{t+1}^k)$. \\
 Consequently, since solutions to the recursion are  non-Markov,    the claim in  \cite[Secition~IV.B, first lines] {malikopoulosIEEEAC2023} that one can use separated control strategy, i.e., $U_t^k = g_t^k (\Pi_t^k(\Delta_t,\Lambda_t^k))$, is left without justification  (see classical centralized POMDPs \cite{kumar-varayia:B1986}). \\
 Moreover,  \cite[Theorem~7] {malikopoulosIEEEAC2023} is also affected, because  (\ref{Mal_58})  assumed $\Pi_t^k$ is Markov and separated strategies are optimal. That is, since $\Pi_t^k$ is not Markov,  the arguments of  $V_t^k(\cdot)$ and $V_{t+1}^k(\cdot)$ appearing    in (\ref{Mal_58}),    are not the ones indicated (the correct DP equations are given in Section~\ref{section:new}). 

\item For  the special case of    only a  single strategy  and observation, i.e., $K=1$,  the recursion of  \cite[Theorem~5]{malikopoulosIEEEAC2023} is expected to  correspond  to the nonlinear posterior PDF of centralized partially observable Markov decision problems (POMDPs) \cite{bertsekas2005,kumar-varayia:B1986}. This follows from the fact that, for $K=1$, there is only a single observation $Y_t^1, t=0, \ldots, T$,  and  $I_t^1=\{\Delta_t, \Lambda_t^1\}=\{Y_{0,t}^1, U_{0,t-1}^1\}, t=0, \ldots, T$. We show that  \cite[Theorem~5]{malikopoulosIEEEAC2023} does not reproduce the nonlinear posterior PDF of centralized POMDPs.  \\
For $K=1$, since there is 
 only one  posterior PDF, 
${\bf P}_t(X_t \big|Y_{0,t}^1, U_{0,t-1}^1)$,  this 
 satisfies  the well-known recursion of nonlinear filtering \cite{bertsekas2005,kumar-varayia:B1986} given by 
\begin{align}
&{\bf P}_{t+1}(X_{t+1} \big|Y_{0,t+1}^1, U_{0,t}^1) \nonumber \\
&=T^{filter}_{t}\big( {\bf P}_{t}(\cdot \big|Y_{0,t}^1, U_{0,t-1}^1),    Y_{t+1}^1, U_{t}^1\big), \hso \forall t.  \label{Mal_49-c}
\end{align}
Notice that the operator  $T^{filter}_{t}\big(\cdot)$ implies that  solutions ${\bf P}_{t}(X_{t} \big|Y_{0,t}^1, U_{0,t-1}^1), \forall t$ of (\ref{Mal_49-c})  are Markovian. \\
On the other hand, for $K=1$   recursion (\ref{Mal_49})  becomes  
\begin{align}
& \Pi_{t+1}^1(Y_{0,t+1}^1,U_{0,t}^1)(X_{t+1})\nonumber \\
&=\theta_t^1\big(\Pi_{t}^1(Y_{0,t}^1, U_{0,t-1}^1)(\cdot),  Y_{0,t+1}^1, U_{0,t}^1\big) \label{Mal_49-b}
\end{align}
Clearly, (\ref{Mal_49-b}) is  not consistent with  (\ref{Mal_49-c}), because   the two operators $T_t^{filter}(\cdot)$ and $\theta_t^1(\cdot)$ do not coincide. \\
The above is another indication  that  (\ref{Mal_49})  does not reproduce the standard nonlinear filtering recursion. 

%
%
\end{enumerate}
Since the DP equations in  \cite[Theorem~7, eqn(57), eqn(58)] {malikopoulosIEEEAC2023} incorrectly  assumed 
the  recursion (\ref{Mal_49}) is Markov,  then the statement of Theorems~7 is invalid. 

Below, we provided an alternative  independent illustration that  the DP equations  in  \cite[Theorem~7, eqn(57), eqn(58)] {malikopoulosIEEEAC2023}, i.e., (\ref{Mal_57}),   (\ref{Mal_58}), are   incorrect.

%
%

\begin{enumerate}
\setcounter{enumi}{2}
\item 
The right hand side of  (\ref{Mal_58})   involves the conditional expectation of the future cost
\begin{align}
&
{\mathbb E}^{ g^{-k}} \Big[ V_{t+1}^k(\Pi_{t+1}^k(\Delta_{t+1}, \Lambda_{t+1}^k) ) \Big| \Pi_t^k=\pi_t^k, \Delta_t=\delta_t,\nonumber \\
&  \Lambda_t^k=\lambda_t^k, U_t^k=u_t^k\Big] \label{Mal_58-d}\\
&\sr{(c)}{=}{\mathbb E}^{{\bf g}^{-k}} \Big[ V_{t+1}^k\big(\theta_t^k\big(\Pi_{t}^k, \Delta_{t}, \Lambda_t^k, Y_{t+1}^k, U_t^k,  Y_{t-T+1}^{-k}, \nonumber \\
& U_{t-T+1}^{-k}\big)\big) \Big| \Pi_t^k=\pi_t^k, \Delta_t=\delta_t,\Lambda_t^k=\lambda_t^k, U_t^k=u_t^k\Big] \label{Mal_58-e}\\
&\equiv G(\pi^k, \delta_t, \lambda_t^k, u_t^k), \hso \mbox{for some $G(\cdot)$} \label{Mal_58-ee}\
\end{align}
where $(c)$ is due to recursion (\ref{Mal_49}). 
On the other hand, the 
 left hand side  of (\ref{Mal_58}) is $V_t^k(\pi_t^k(\delta_t, \lambda_t^k))$, and this contradicts its right hand side (according to (\ref{Mal_58-ee})). 
\end{enumerate}

In view of the above,  Theorem~5 and  Theorem~7 in \cite{malikopoulosIEEEAC2023} are not valid. Moreover,  the additional statements in  Lemma~7,  Lemma~8, Corollary~1, Theorem~6 in  \cite{malikopoulosIEEEAC2023} are also left without any justification, because they rely on the validity of  Theorem~5 and  Theorem~7 in \cite{malikopoulosIEEEAC2023}.

%
%
 
%
%
%

\section{The New DP Equations and Information States for PbP Optimality}
\label{section:new}
We present the new DP equations and information states based on  PbP optimality (following 
 \cite{CDC-GU-SD:CDC2026-ArXiv,CDC:ECC2025}). 

In  
 Lemma~\ref{lemma:payoff-pbp}, we  express   the  payoff  of   strategy $g^k \in {\bf U}_{0,T-1}^k$, i.e.,  $J_{T} (g^{k}, g^{-k,o})$,   w.r.t.  the \^a posteriori  probability density function (PDF) of  nonlinear filtering of estimating the  extended state process $S_t^k \tri \{X_{t},   \Lambda_t^{-k}\}, t=0, \ldots, T$, which is unknown to $g^k(\cdot)$   from $I_t^k=\{\Delta_t, \Lambda_t^k\}, t=0, \ldots, T $. This  choice of extended state  $S_t^k$ follows from  an application of Bellman's principle of optimality. 

\ \

\begin{lemma}(Payoff of the Strategies)\\
\label{lemma:payoff-pbp}
For each $k$, define the  \^a posteriori  PDF of $\{X_t, \Lambda_t^{-k}\}$ conditional on  $I_t^k=\{\Delta_t^k, \Lambda_t^k\}$ by\footnote{Notation $\Xi_t^k[\Delta_t^k, \Lambda_t^k](x_t, \lambda_{t}^{-k})$ is often used  as a reminder that this is a measurable function of the data $I_t^k=\{\Delta_t^k, \Lambda_t^k\}$.} 
\begin{align}
 \Xi_t^k[\Delta_t^k, \Lambda_t^k](x_t, \lambda_{t}^{-k}) \tri 
 {\bf P}_t^{{g^k, g^{-k}}} (x_t, \lambda_{t}^{-k}\big|\Delta_t, \Lambda_t^k) \label{pPM}
\end{align} 
$ t=0, \ldots, T$.
The    payoff $J_{T} (g^{k}, g^{-k,o})$ of  strategy $g^k \in {\bf  U}_{0,T-1}^k$  for fixed   strategy  $g^{-k}=g^{-k,o}\in {\bf  U}_{0,T-1}^{-k}$ is expressed as, 
\begin{align}
&J_{T} (g^{k}, g^{-k,o}) ={\mathbb E}^{g^k, g^{-k,o}} \Big\{ \sum_{t=0}^{T-1} c_t( X_t,g_t^k,  g_t^{-k,o}) +c_T(X_T) \Big\}\nonumber  \\
&=   {\mathbb E}^{^{g^k, g^{-k,o}}} \Big\{ \sum_{t=0}^{T-1}  \int_{{\mathbb X}_t \times {\mathbb L}_{t}^{-k}}   c_t(x_t,g_t^k(\Delta_t^{k}, \Lambda_t^k),  g_t^{-k,o}(\Delta_t, \lambda_t^{-k}))\nonumber \\
&\hst .{\bf P}_t^{{g^k, g^{-k,o}}}(x_t, \lambda_{t}^{-k}\big|\Delta_t, \Lambda_t^k)dx_t d\lambda_t^{-k} \nonumber \\
&+\int_{{\mathbb X}_T}  c_T(x_T) {\bf P}_T^{{g^k, g^{-k,o}}}(x_T\big|\Delta_T, \Lambda_T^k)dx_T \Big\}, \label{tpayf-pbp}\\
&g_t^{-k}(\Delta_t, \lambda_t^{-k})\equiv \big\{g_t^1(\Delta_t, \lambda_t^1), \ldots, g_t^{k-1}(\Delta_t, \lambda_t^{k-1}), \nonumber \\
&g_t^{k+1}(\Delta_t, \lambda_t^{k+1}), \ldots, g_t^K(\Delta_t, \lambda_t^K)\big\}
\end{align}
where ${\bf P}_T^{{g^k, g^{-k,o}}}(x_T\big|\Delta_T, \Lambda_T^k)$ is the marginal of ${\bf P}_T^{{g^k, g^{-k,o}}}(x_T, \lambda_T^{-k}\big|\Delta_T, \Lambda_T^k)$.
\end{lemma}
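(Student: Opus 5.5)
The identity will follow from the tower property of conditional expectation, applied term by term. The key fact is that at each time $t$ the control $U_t^k=g_t^k(\Delta_t,\Lambda_t^k)$ is measurable with respect to the information $I_t^k=\{\Delta_t,\Lambda_t^k\}$. By contrast, the other controls $U_t^{-k}=g_t^{-k,o}(\Delta_t,\Lambda_t^{-k})$ depend on $I_t^k$ together with the unknown private component $\Lambda_t^{-k}$. The extended state $S_t^k=\{X_t,\Lambda_t^{-k}\}$ is therefore exactly what must be integrated out, given $I_t^k$, to evaluate the running cost.

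First, the cost is bounded from below and lower semicontinuous, so each summand is quasi-integrable. By linearity of expectation over the finite sum, $J_T(g^k,g^{-k,o})$ can be written as $\sum_{t=0}^{T-1}{\mathbb E}^{g^k,g^{-k,o}}\{c_t(X_t,U_t^k,U_t^{-k})\}+{\mathbb E}^{g^k,g^{-k,o}}\{c_T(X_T)\}$. Splitting the sum is legitimate in $(-\infty,\infty]$ because every term is bounded below.

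Second, fix $t$ and condition on $\{\Delta_t,\Lambda_t^k\}$: ${\mathbb E}\{c_t(\cdot)\}={\mathbb E}\{{\mathbb E}\{c_t(X_t,U_t^k,U_t^{-k})|\Delta_t,\Lambda_t^k\}\}$. Inside the inner expectation, substitute $U_t^k=g_t^k(\Delta_t,\Lambda_t^k)$ and $U_t^{-k}=g_t^{-k,o}(\Delta_t,\Lambda_t^{-k})$. The integrand is then a measurable function $\varphi(x_t,\lambda_t^{-k};\Delta_t,\Lambda_t^k)$ of the pair $(X_t,\Lambda_t^{-k})$ and the conditioning variables. By the standard "freezing" lemma for conditional expectations, valid when a regular conditional distribution exists, the inner expectation equals $\int\varphi(x_t,\lambda_t^{-k};\Delta_t,\Lambda_t^k)\,{\bf P}_t^{g^k,g^{-k,o}}(dx_t,d\lambda_t^{-k}|\Delta_t,\Lambda_t^k)$. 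Under Assumptions~\ref{ass-1}, this conditional distribution has the density $\Xi_t^k$ of (\ref{pPM}).

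Third, the terminal term is handled the same way. Since $c_T$ depends only on $x_T$, integrating out $\lambda_T^{-k}$ leaves the marginal ${\bf P}_T^{g^k,g^{-k,o}}(x_T|\Delta_T,\Lambda_T^k)$. Summing the terms gives (\ref{tpayf-pbp}).

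The main obstacle is the existence of the conditional density $\Xi_t^k$. It requires a regular conditional distribution of $(X_t,\Lambda_t^{-k})$ given $I_t^k$, together with absolute continuity with respect to $dx_t\,d\lambda_t^{-k}$. I will obtain this by writing the joint law of $(X_{0,t},Y_{0,t}^{(K)})$ under the fixed strategies through Bayes' rule. This joint law is a product of the kernels $S_{j+1}$ and $Q_j^k$ from Assumptions~\ref{ass-1}, with the controls inserted as deterministic functions of the past. Dividing by the marginal density of $I_t^k$ then yields the density.

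One subtlety is that $\Lambda_t^{-k}$ contains past controls $U_{t-n+1,t-1}^{-k}$. These are deterministic functions of observations, so they carry no density. They are handled by expressing everything in terms of the observations $Y$ and treating $d\lambda_t^{-k}$ as the product of Lebesgue measure on the $Y$ components and point masses on the control components. This is presumably the interpretation intended, following \cite{CDC:ECC2025,CDC-GU-SD:CDC2026-ArXiv}. Measurability of $\varphi$ in its arguments is routine given the measurability of the $g$'s and $c_t$.
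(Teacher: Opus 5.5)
Your proposal is correct and takes the same route as the paper. The paper's proof says only that (\ref{tpayf-pbp}) follows by re-conditioning each term on $I_t^k=\{\Delta_t,\Lambda_t^k\}$, which is your tower-property argument: use the $I_t^k$-measurability of $U_t^k$ and integrate $(X_t,\Lambda_t^{-k})$ against $\Xi_t^k$. Your added care (well-definedness in $(-\infty,\infty]$, the disintegration step, and the point-mass treatment of the control components of $\Lambda_t^{-k}$) fills in details the paper leaves implicit, and matches the delta-function convention used in Theorem~\ref{thm:is-pbp}.
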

\begin{proof}  (\ref{tpayf-pbp}) follows   by 
re-conditioning on $I_t^k$.
\end{proof}

\ \

\begin{remark}
\label{rem:1}
 There are multiple points to be made regarding Lemma~\ref{lemma:payoff-pbp}.

(I) For each $k$,  the payoff  (\ref{tpayf-pbp})  involves, not only  strategy  $g_t^k(\cdot)$ with argument $\{\Delta_t, \Lambda_t^k\}, t=1, \ldots, T-1$,  but also all other strategies $g_t^{m,o}(\cdot)$ whose arguments are $\{\Delta_t, \Lambda_t^m\}$,  for $t=1, \ldots, T-1$ and $ m\neq k$. Since  not only  the unobservable state  $X_t$, but also   the private information components $\Lambda_t^{-k}=\{\Lambda_t^m\}_{m=1, m\neq k}^K$,  are not available  to the  strategy $g_t^k$,  then  $S_t^k\tri \{X_t, \Lambda_t^{-k}\}$ needs to be estimated from the data $\{\Delta_t, \Lambda_t^k\}$.  Hence,  the representation  (\ref{tpayf-pbp}) that makes use of the posterior PDF ${\bf P}_t^{{g^k, g^{-k,o}}}(x_t, \lambda_t^{-k}\big|\Delta_t, \Lambda_t^k)$. 

(II)  Letting  $K=1$, representation  (\ref{tpayf-pbp})   reduces to the  representation of centralized  POMDPs \cite{bertsekas2005,kumar-varayia:B1986}.
 Indeed, if $K=1$ then $c_t(X_t, g_t^1, g_t^{-k})=\overline{c}(X_t, g_t^1(Y_{0,t}^1, U_{0,t-1}^1))$, i.e., $\{\Delta_t, \Lambda_t^1\}=\{Y_{0,t}^1, U_{0,t-1}^1\}$, 
and  in (\ref{tpayf-pbp}),   ${\bf P}_t^{{g^k, g^{-k,o}}}(x_t, \lambda_t^{-k}\big|\Delta_t, \Lambda_t^k)={\bf P}_t^{g^1}(x_t \big|Y_{0,t}^1, U_{0,t-1}^1)$, as expected. 

\end{remark}

\ \

The main point of  Remark~\ref{rem:1}, is that for each $k$, strategy $g^k(\cdot)$ is associated with the  {\it private posterior} PDF,  
$\Xi_t^k[\Delta_{t}, \Lambda_t^{k}] ={\bf P}_t^{{g^k, g^{-k}}}(x_t, \lambda_t^{-k}\big|\Delta_t, \Lambda_t^k), t=1, \ldots, T$, and this will play an important role in the characterization of PbP optimality via DP. 

In Theorem~\ref{thm:is-pbp},  we present   the   recursion of the PDF    $\big\{\Xi_t^k[\Delta_{t}, \Lambda_t^{k}]\big| t=0, \ldots, T\big\}$, and some of its properties.

\ \

\begin{theorem}(Recursion of Private   \^a Posteriori PDFs)\\
\label{thm:is-pbp}
Let 
$n \in \{1,2,\ldots, T\}$. For each $ k$,  any   
 $\{g^k,  g^{-k}\}\in {\bf  U}_{0,T-1}^k\times  {\bf U}_{0,T-1}^{-k}$,  and realization $I_t^k=\{\Delta_t, \Lambda_t^k\}=\{\delta_t, \lambda_t^k\}$, 
  the  private 
   \^a posteriori PDF of strategy $g^k$, 
   $\Xi_t^k[\Delta_t, \Lambda_t^k]=\xi_t^k[\delta_t, \lambda_t^k]\equiv {\bf P}_t^{{g^k, g^{-k}}} (x_t, \lambda_{t}^{-k}\big|\Delta_t=\delta_t, \Lambda_t^k=\lambda_t^k) $  satisfies   the recursion,
 \begin{align}    
&{\bf P}_{t+1}^{{g^k, g^{-k}}} (x_{t+1}, \lambda_{t+1}^{-k}\big|\delta_{t+1}, \lambda_{t+1}^k), \; \forall t,  \;  \forall k \label{apost_1}\  \\
&= {\bf T}_{t+1}^{k}\big(y_{t+1}^{k}, u_t^k, g_t^{-k}(\delta_t, \cdot),{\bf P}_{t}^{{g^k, g^{-k}}} (\cdot\big|\delta_{t}, \lambda_t^k)\big)(x_{t+1}, \lambda_{t+1}^{-k}) , \nonumber \\
& {\bf P}_0^{{g^k, g^{-k}}}(x_0, \lambda_0^{-k}|\delta_0, \lambda_0^k)= {\bf P}_0(x_0, y_0^{-k}|y_0^k)  
\end{align}
where $u_t^k=g_t^k(\delta_t, \lambda_t^k) \in {\mathbb U}_t^k$ and  the operator ${\bf T}_{t+1}^{k}\big(\cdot\big)(\cdot,\cdot)$  is defined  by  (non-zero if its denominator  is non-zero and zero otherwise),
\begin{align}
&{\bf T}_{t+1}^{k}\big(y_{t+1}^{k}, u_{t}^k,  g_t^{-k}(\delta_t, \cdot),\xi_{t}^{k} (\cdot)\big)(x_{t+1}, \lambda_{t+1}^{-k}) \tri \nonumber  \\
&\frac{\overline{\bf T}_{t+1}^k\big(y_{t+1}^{k}, u_{t}^k, g_t^{-k}(\delta_t, \cdot),\xi_{t}^{k} (\cdot)\big)(x_{t+1}, \lambda_{t+1}^{-k}) }{\int_{{\mathbb S}_{t+1}^k } \overline{\bf T}_{t+1}^k\big(y_{t+1}^{ k}, u_{t}^k, g_t^{-k}(\delta_t,\cdot),\xi_{t}^{k} (\cdot)\big)(x_{t+1},\lambda_{t+1}^{-k} )  dx_{t+1}d\lambda_{t+1}^{-k} }, \nonumber  \\
&{\mathbb S}_{t+1}^k \tri {\mathbb X}_{t+1}  \times {\mathbb L}_{ t+1}^{-k},  \hso  {\mathbb L}_{ t}^{-k} ={\mathbb Y}_{t-n+1, t}^{-k} \times {\mathbb U}_{t-n+1,t-1}^{-k}  \nonumber \\
&  \overline{\bf T}_{t+1}^k\big(y_{t+1}^{k}, u_{t}^k, g_t^{-k}(\delta_t, \cdot),\xi_{t}^{k} (\cdot)\big)(x_{t+1},\lambda_{t+1}^{-k}) \nonumber\\\
&\tri   Q_{t+1}^{k}(y_{t+1}^{k}|x_{t+1})    \prod_{j=1, j\neq k}^K Q_{t+1}^{j}(y_{t+1}^{j}|x_{t+1}) \nonumber \\
&.  \int_{{\mathbb X}_t}  S_{t+1}(x_{t+1}\big|x_{t},u_{t}^k, g_t^{-k}(\delta_t, \lambda_t^{-k})) \nonumber \\
& . \prod_{j=1, j \neq k}^K { \mu}_{\gamma_t^{j}(\delta_t, \lambda_t^j)}(u_t^{j}) \; \xi_{t}^{k} [\delta_t, \lambda_t^k](x_{t}, \lambda_{t}^{-k})dx_t \label{apost_1a} 
\end{align}
$ { \mu}_{\gamma_t^{j}(\delta_t, \lambda_t^j)}(u_t^{j})$  is the delta function at $u_t^j=\gamma_t^{j}(\delta_t, \lambda_t^j)$.   \\
For  $n=1$,  (\ref{apost_1})-(\ref{apost_1a}) hold with  the term $\prod_{j=1, j \neq k}^K{ \mu}_{\gamma_t^{j}(\delta_t, \lambda_t^j)}(u_t^{j}) $ removed  (i.e.,  $\lambda_{t+1}^{-k}=Y_{t+1}^{-k}$). \\
Moreover, the following hold. 

(1) For each $k$, 
 $\xi_{t}^k[\delta_t, \lambda_{t}^k]\equiv  {\bf P}_{t}^{{g^k, g^{-k}}} (x_{t}, \lambda_{t}^{-k}\big|\Delta_{t}=\delta_t, \Lambda_t=\lambda_t^k),\forall  t$ depends on the actions $u_{0,t-1}^k\in {\mathbb U}_{0,t-1}^k\tri \times_{j=1}^{t-1} {\mathbb U}_j^k$ and not   the strategies  $ g^k(\cdot)\in {\bf  U}_{0,t-1}^k$,   i.e., 
\begin{align}
{\bf P}_{t}^{{g^k, g^{-k}}} (x_{t}, \lambda_{t}^{-k}\big|\delta_{t}, \lambda_t^k)={\bf P}_{t}^{{g^{-k}}} (x_{t}, \lambda_{t}^{-k}\big|\delta_{t}, \lambda_t^k), \; \forall (t,   g^k).\nonumber 
\end{align}
\indent (2) 
 For each $k$,  the process  $\{ \Xi_t^{k}[\Delta_t, \Lambda_t^k]| t=0, \ldots, T\}$   is  conditionally  Markov w.r.t. $\{ \Delta_t | t=0, \ldots, T\}$, i.e., 
 \begin{align}
&{\bf   P}_{t+1}^{g^{k}, g^{-k}}(\xi_{t+1}^k \Big| \delta_{0,t}, \lambda_{0,t}^k, u_{0,t}^k) \label{ext-m}   \\
&={\bf P}_{t+1}^{g^{-k}}(\xi_{t+1}^k \Big| \xi_t^k, \delta_t,u_t^k), \hso  \forall g^{-k}, \; t=0,1, \ldots, T. \nonumber
\end{align}
\end{theorem}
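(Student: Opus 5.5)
The recursion (\ref{apost_1})--(\ref{apost_1a}) will be derived by a direct Bayes computation on the extended state $S_t^k=\{X_t,\Lambda_t^{-k}\}$, mimicking the centralized nonlinear filter. The plan rests on the bookkeeping identity for the $n$-step delayed sharing pattern,
\begin{align*}
\{\Delta_{t+1},\Lambda_{t+1}^k\}&=\{\Delta_t,\Lambda_t^k,Y_{t+1}^k,U_t^k\}\cup\{Y_{t-n+1}^{-k},U_{t-n+1}^{-k}\},\\
\Lambda_{t+1}^{-k}&=\big(\Lambda_t^{-k}\setminus\{Y_{t-n+1}^{-k},U_{t-n+1}^{-k}\}\big)\cup\{Y_{t+1}^{-k},U_t^{-k}\}.
\end{align*}
Hence the components of $\Lambda_t^{-k}$ that migrate into $\Delta_{t+1}$ are already coordinates of $s_t^k$. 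For $n=1$ the private set reduces to $\Lambda_{t+1}^{-k}=Y_{t+1}^{-k}$, because $U_t^{-k}$ becomes shared.

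The first step is to write the joint density of $(x_{t+1},\lambda_{t+1}^{-k},y_{t+1}^k,u_t^k)$ given $(\delta_t,\lambda_t^k)$. By the chain rule and the independence of $W_t$ and $Z_{t+1}^j$, this density factors as
\[
Q^k_{t+1}\prod_{j\neq k}Q^j_{t+1}\; S_{t+1}\big(x_{t+1}\,|\,x_t,u_t^k,g_t^{-k}(\delta_t,\lambda_t^{-k})\big)\;\prod_{j\neq k}\mu_{g_t^j(\delta_t,\lambda_t^j)}(u_t^j)\;\xi_t^k(x_t,\lambda_t^{-k}),
\]
integrated over $x_t$. Here $u_t^k=g_t^k(\delta_t,\lambda_t^k)$ is a deterministic function of the conditioning data, so it enters only as a parameter. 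Next I condition additionally on the newly shared items $\{y_{t-n+1}^{-k},u_{t-n+1}^{-k}\}$. These are coordinates of $\lambda_t^{-k}$, so conditioning on them amounts to restricting $\xi_t^k$ to the slice consistent with $\delta_{t+1}$. This restriction is why $\delta_{t+1}$ rather than only $y_{t+1}^k$ enters, and why the kernel uses $g_t^{-k}(\delta_t,\cdot)$. Normalizing over ${\mathbb S}^k_{t+1}$ then gives ${\bf T}^k_{t+1}$, with the convention that the ratio is set to zero when the denominator vanishes. The initial condition is immediate, since $\Lambda_0^{-k}=Y_0^{-k}$ and $\Delta_0=\emptyset$. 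The main obstacle I expect is the measure-theoretic care with the delta factors, which make the joint law of $\lambda_{t+1}^{-k}$ non-absolutely-continuous in its $u^{-k}$ coordinates. I would handle this by treating $\mu$ as a counting/Dirac reference measure on those coordinates, so that ``PDF'' is understood with respect to a product reference measure.

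Part (1) follows by induction on $t$. The operator ${\bf T}^k_{t+1}$ involves $g^k$ only through the realized value $u_t^k$, which is part of $\lambda_{t+1}^k$, so replacing $g^k$ by any other strategy consistent with the same realization leaves the posterior unchanged. The base case has no $g^k$ at all.

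For part (2), the recursion shows that $\xi_{t+1}^k$ is a measurable function of $(\xi_t^k,\delta_t,u_t^k)$ together with the new observations $(y_{t+1}^k,\,y_{t-n+1}^{-k},u_{t-n+1}^{-k})$. It therefore suffices to show that the conditional law of these new observations given $(\delta_{0,t},\lambda_{0,t}^k,u_{0,t}^k)$ depends on the past only through $(\xi_t^k,\delta_t,u_t^k)$. This holds because that law is exactly the unnormalized $\overline{\bf T}^k_{t+1}$, integrated over $x_{t+1}$ and the unshared coordinates. By part (1), this quantity does not depend on $g^k$, which yields (\ref{ext-m}).
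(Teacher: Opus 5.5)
Your proposal is correct. The paper gives no proof of its own here and defers the derivation to the companion arXiv report, but its footnote in Section~\ref{section:gaps} names exactly the route you take, and you carry it out soundly:
\begin{itemize}
\item a Bayes update of the extended state $S_t^k=\{X_t,\Lambda_t^{-k}\}$ that restricts $\xi_t^k$ to the newly shared coordinates $(y_{t-n+1}^{-k},u_{t-n+1}^{-k})$;
\item induction on the recursion for part (1);
\item for part (2), the observation that the update map and the predictive law of $(y_{t+1}^k,y_{t-n+1}^{-k},u_{t-n+1}^{-k})$ depend only on $(\xi_t^k,\delta_t,u_t^k)$ and $g_t^{-k}$, followed by the tower property.
\end{itemize}
The only slip is cosmetic: for $n=1$ the realized $u_t^k$ lies in $\delta_{t+1}$ rather than in $\lambda_{t+1}^k$, which changes nothing in the induction.
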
 
\begin{proof}  The derivation is found in \cite{CDC-GU-SD:CDC2026-ArXiv}. 
\end{proof}

\ \

\begin{remark} The properties of Theorem~\ref{thm:is-pbp}.(1), (2)  are essential  in the  development of the   DP  for PbP optimality. 

(I) The major difference compared to  the posterior centralized filtering PDF  of classical centralized POMDPs   \cite{bertsekas2005,kumar-varayia:B1986},  is that,   the operator ${\bf  T}_{k+1}^k\big(\cdot\big)$ in addition to  $\{\Pi_t^k, Y_{t+1}^k, u_t^k\}$, also depends on   $\Delta_t$ through     $g_t^{-k}(\Delta_t, \cdot)$.  \\ If  $K=1$ then there  is only one strategy, $g^1$, $\{\delta_t, \lambda_t^1\}=\{y_{0,t}^1, u_{0,t-1}^1\}$,   $\lambda_t^{-1}=\{\emptyset\}, \forall t$,  and ${\bf P}_{t}^{{g^k, g^{-k}}} (x_{t}, \lambda_{t}^{-k}\big|\delta_{t}, \lambda_{t}^k)\big|_{K=1}={\bf P}_{t}(x_{t} \big|y_{0,t}^1, u_{0,t-1}^1)$  satisfying 
 the recursion (\ref{apost_1}) reduces to the nonlinear filtering recursion     (\ref{Mal_49-c}),      as expected \cite{bertsekas2005,kumar-varayia:B1986}.

(II) An important property of private   \^a posteriori PDF $\{\Xi_t^k[\Delta_t, \Lambda_t^k],  t=0, \ldots, T\}$ is that it depends on the actions $u_t^k\in {\mathbb U}_t^k, t=0, \ldots, T-1$ and not on the strategies $g^k(\cdot) \in {\bf U}_{0,T-1}^k$, precisely as the  posterior centralized filtering PDF. 
\end{remark} 

\ \

Theorems~\ref{thm:vp-mn} presents the  DP equations  based on PbP optimality. These  illustrate  their explicit dependence on   the private \^a posteriori PDFs $\{\Xi_t^k[\Delta_t, \Lambda_t^k],  t=0, \ldots, T\}$  and the common information component $\{\Delta_t,  t=0, \ldots, T\}$. 

\ \

\begin{theorem}(Decentralized DP Equations  for PbP Optimality-Necessary Conditions and Verification Theorem)
\label{thm:vp-mn}
{\it (1) Necessary Conditions.}  For each $ k$,  suppose a PbP optimal $g^{k,o} \in {\bf  U}_{t,T-1}^k, \forall k$ exists.  

(1.1) 
The value processes of Definition~\ref{def:ctg-nm}  satisfy, $\forall (\delta_t, \lambda_t^k)$, 
\begin{align}
 {\cal V}_{t}^{g^{k,o},g^{-k,o}}(\delta_t,\lambda_t^k) =V_t^{g^{-k,o}}(\xi_t^k,\delta_t,\lambda_t^k ), \;  \forall (t,  k)   
\end{align}
where 
 $ { V}_t^{ g^{-k,o}}(\cdot),  
 \forall ( \xi_t^k, \delta_t, \lambda_t^k), \forall t$ are solutions of the following recursions:
\begin{align}
&{ V}_T^{g^{-k,o}}(\xi_T^k,\delta_T, \lambda_T^k  )={\mathbb E}^{ g^{-k,o}}   \Big\{ \label{dp_1_nnn-mn}   \\
& c_T(X_T)   \Big| \xi_T^k,\delta_T, \lambda_T^k \Big\}, \hst \forall k \in {\mathbb Z}_+^K \nonumber\\
&= \int_{{\mathbb X}_T  \times {\mathbb L}_{T}^{-k} }     c_T(x_T) \xi_{T}^{k}[\delta_T, \lambda_T^k] (x_{T}, \lambda_T^{-k})dx_T d\lambda_T^{-k}, \label{dp_1_nnn_1-mn}\\
&{ V}_t^{\gamma^{-k,o}}(\xi_t^k,\delta_t,\lambda_t^k  )  = \inf_{u_t^k \in {\mathbb U}_t^k}  {\mathbb E}^{ g^{-k,o}}   \Big\{ \label{dp_2_mn} \\
&   c_t(X_t,u_t^k,  g_t^{-k,o}(\delta_t, \Lambda_t^{-k}))  +{ V}_{t+1}^{g^{-k,o}}(\Xi_{t+1}^k, \Delta_{t+1}, \Lambda_{t+1}^k  )\nonumber\\
 & \Big| \xi_t^k,\delta_t, \lambda_t^k , U_t^k \Big\}, \hso   t=0,\ldots,T-1, \;   \forall k  \label{dp_2_mn}  \\
 &=  \inf_{u_t^k \in {\mathbb U}_t^k}  \Big\{  \int_{{\mathbb X}_t \times {\mathbb L}_{t}^{-k}}    c_t(x_t,u_t^k,  g_t^{-k,o}(\delta_t, \lambda_t^{-k}))  \nonumber \\
 &.  \xi_{t}^k[\delta_t, \lambda_t^k] (x_{t}, \lambda_{t}^{-k})dx_t d\lambda_t^{-k}\nonumber 
   \end{align}
 \begin{align}
 &  +  \int_{ {\mathbb Y}_{t+1}^k \times {\mathbb X}_{t, t+1}   \times {\mathbb L}_{t}^{-k}    } {V}_{t+1}^{g^{-k,o}}({\bf T}_{t+1}^{k}\big(y_{t+1}^{k}, u_t^k, g_t^{-k}(\delta_t, \cdot),    \xi_t^k(\cdot)\big), \nonumber  \\
 & \delta_t, \lambda_t^k ,   y_{t+1}^k, u_t^k,    y_{t-n+1}^{-k}, u_{t-n+1}^{-k})\label{dp_1_na_m}   \\
 &. {\bf P}_{t+1}^{ g^{-k,o}}(y_{t+1}^k, \lambda_{t}^{-k}, x_{t}, x_{t+1}\Big|   \xi_t^k, \delta_t,  u_t^k) dy_{t+1}^k  d\lambda_{t}^{-k} dx_{t} dx_{t+1} \Big\}  \nonumber
\end{align}
where   
 the joint PDF  of the last RHS term is
%
\begin{align}
& 
 {\bf P}_{t+1}^{{ g^{-k,o}}}(y_{t+1}^k, \lambda_{t}^{-k},x_t, x_{t+1} \big|  \xi_t^k, \delta_t, u_t^k)  \label{Mar_REC_1-mn-aaa}  \\
&= Q_{t+1}^k(y_{t+1}^k\big|    x_{t+1}){ S}_{t+1}(x_{t+1}\big|   x_t, u_t^k,g_t^{-k,o}(\delta_t, \lambda_t^{-k}))\nonumber \\
&.\xi_{t}^k[\delta_t,\lambda_t^k](x_{t},\lambda_{t}^{-k})
\label{Mar_REC_1-mn}  
\end{align}
and $u_{t-n+1}^{-k}=g_{t-n+1}^{-k,o}=\{g_{t-n+1}^{j,o}(\delta_{t-n+1}, \lambda_{t-n+1}^{j})\}_{j=1, j\neq k}^K$. 

(1.2)  For each $k$, if  the infimum in the DP equations of part (1.1) exists, then the optimal strategy occurs in the set of semi-separated strategies defined by,  $u_t^{k,o}=\gamma_t^{k,o}(\xi_t^k,\delta_t,\lambda_t^k )\in {\mathbb U}_t^k, t=0, \ldots,  T-1$,  i.e., ${\cal G}_t^k\tri \{\Xi_t^k[\Delta_t, \Lambda_t^k],\Delta_t,\Lambda_t^k \}$ is a sufficient statistic for $U_t^k$, $\forall t$. 

{\it (2) Sufficient Conditions.} Suppose the value functions $V_t^{g^{-k,o}}(\cdot), \forall t,  \forall k $ 
satisfy the DP  recursions of part (1).\\
The inequalities hold a.s.  (almost surely), 
\begin{align}
&{V}_T^{g^{-k,o}}(\Xi_T^k,\Delta_T, \Lambda_T^k ) \label{in_1-a} \\ 
&\leq J_{T,T}^{g^{k}, g^{-k,o}}(\Delta_T, \Lambda_T^k) , \;   \forall g^k \in {\bf U}_{0,T}^k,\;   \forall k,\nonumber  \\
&{ V}_t^{g^{-k,o}}(\Xi_t^k,\Delta_t,\Lambda_t^k )  \label{in_1}\\
& \leq J_{t,T}^{g^{k}, \gamma^{-k,o}}(\Delta_t, \Lambda_t^k) , \;  t=0, \ldots, T-1, \; \forall g^k \in {\bf  U}_{0,T-1}^k, \; \forall k . \nonumber 
\end{align}
where ${J}_{t,T}^{g^{k},g^{-k,o}}(\Delta_t, \Lambda_t^k)$ is given by  (\ref{opt-ctg-pbp-tc}).

\end{theorem}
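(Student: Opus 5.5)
The plan is a backward induction in the style of centralized POMDP dynamic programming, with player $k$'s problem treated as a single-agent POMDP once $g^{-k,o}$ is fixed. The extended unobserved state is $S_t^k=\{X_t,\Lambda_t^{-k}\}$ and the observed data are $I_t^k=\{\Delta_t,\Lambda_t^k\}$. With $g^{-k,o}$ frozen, the payoff representation of Lemma~\ref{lemma:payoff-pbp} expresses the running cost at time $t$ as a function of $(\Xi_t^k,\Delta_t,\Lambda_t^k,U_t^k)$ only, namely $\int c_t(x_t,u_t^k,g_t^{-k,o}(\delta_t,\lambda_t^{-k}))\,\xi_t^k(x_t,\lambda_t^{-k})\,dx_t d\lambda_t^{-k}$. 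The terminal cost is likewise a function of $\xi_T^k$. This immediately gives (\ref{dp_1_nnn-mn})--(\ref{dp_1_nnn_1-mn}) at $t=T$, and hence (1.1) at $t=T$, since ${\cal V}_T$ has no infimum.

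For the induction step, I assume ${\cal V}_{t+1}^{g^{k,o},g^{-k,o}}(\delta_{t+1},\lambda_{t+1}^k)=V_{t+1}^{g^{-k,o}}(\xi_{t+1}^k,\delta_{t+1},\lambda_{t+1}^k)$. I split ${\bf U}_{t,T-1}^k$ into $g_t^k$ and $g_{t+1,T-1}^k$ and use the tower property conditioning on $I_{t+1}^k$. By Theorem~\ref{thm:is-pbp}(1), $\xi_{t+1}^k$ depends on the action $u_t^k$ but not on the strategy $g^k$. So the inner infimum over $g_{t+1,T-1}^k$ can be interchanged with the conditional expectation given $I_t^k$. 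This is the standard interchange argument of \cite{bertsekas-shreve1978}: measurable selection of $\epsilon$-optimal strategies under the lower semicontinuity and lower-boundedness assumptions. The remaining infimum over $g_t^k(\delta_t,\lambda_t^k)$ then reduces to a pointwise infimum over $u_t^k\in{\mathbb U}_t^k$, which produces (\ref{dp_2_mn}).

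To obtain the explicit form (\ref{dp_1_na_m}), I write $\{\Delta_{t+1},\Lambda_{t+1}^k\}=\{\delta_t,\lambda_t^k,y_{t+1}^k,u_t^k,y_{t-n+1}^{-k},u_{t-n+1}^{-k}\}$. I substitute $\Xi_{t+1}^k={\bf T}_{t+1}^k(\cdot)$ from (\ref{apost_1}). I then compute the conditional joint density of $(Y_{t+1}^k,\Lambda_t^{-k},X_t,X_{t+1})$ given $(\xi_t^k,\delta_t,u_t^k)$ by factorizing with Assumptions~\ref{ass-1} and the mutual independence of the noises, which gives (\ref{Mar_REC_1-mn}). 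The conditional Markov property (\ref{ext-m}) of Theorem~\ref{thm:is-pbp}(2) is what justifies conditioning only on $(\xi_t^k,\delta_t,\lambda_t^k,u_t^k)$ rather than on the full past. Part (1.2) then follows by choosing a minimizer $u_t^{k,o}=\gamma_t^{k,o}(\xi_t^k,\delta_t,\lambda_t^k)$ at each stage, using a measurable selection theorem so that $\gamma_t^{k,o}$ is measurable.

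For part (2), I fix an arbitrary $g^k$ and induct backward. At $T$, (\ref{in_1-a}) holds with equality by Lemma~\ref{lemma:payoff-pbp}. At step $t$, the recursion (\ref{dp_2_mn}) with $u_t^k=g_t^k(\Delta_t,\Lambda_t^k)$ substituted gives $V_t\le {\mathbb E}[c_t+V_{t+1}\,|\,I_t^k]$, and the induction hypothesis bounds $V_{t+1}$ by $J_{t+1,T}$. The tower property then yields (\ref{in_1}).

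I expect the main obstacle to be the interchange of infimum and conditional expectation in the induction step. The value function also depends on $(\delta_t,\lambda_t^k)$, whose dimension grows, and the integrand involves $g_t^{-k,o}(\delta_t,\cdot)$ inside ${\bf T}_{t+1}^k$. I would handle this by invoking the universally measurable selection framework of \cite{bertsekas-shreve1978}, as developed in \cite{CDC-GU-SD:CDC2026-ArXiv}, rather than by proving it from scratch.
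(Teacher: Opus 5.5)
Your proposal is correct and follows essentially the route the paper indicates (its proof is deferred to \cite{CDC-GU-SD:CDC2026-ArXiv}): you freeze $g^{-k,o}$, treat $S_t^k=\{X_t,\Lambda_t^{-k}\}$ as the unobserved state of a single-agent problem, and combine Lemma~\ref{lemma:payoff-pbp} and the strategy-independence of $\Xi_t^k$ (Theorem~\ref{thm:is-pbp}(1)) with Bellman backward induction for part (1) and a backward-induction verification argument for part (2). One minor remark: appealing to Theorem~\ref{thm:is-pbp}(2) to justify conditioning on $(\xi_t^k,\delta_t,\lambda_t^k,u_t^k)$ rather than on the full past is not really needed, because $I_t^k\subseteq I_{t+1}^k$ and $\xi_t^k$ is a function of $I_t^k$, so that conditioning already coincides with conditioning on player $k$'s entire history.
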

\begin{proof} The derivation is found in \cite{CDC-GU-SD:CDC2026-ArXiv}. 
\end{proof}

\ \


\begin{remark} (On Theorem~\ref{thm:vp-mn})\\
\indent  In Theorems~\ref{thm:vp-mn},   the optimization in the right hand side of the DP equations    is over the action spaces, $u^k \in {\mathbb U}_t^k$, and     ${\cal G}_t^k\tri \{\Xi_t^k,\Delta_t^{(K)},\Lambda_t^k \}$ is a sufficient statistic for
$g^{k,o}\in {\bf  U}_{0,T-1}^k$,  but  expansive w.r.t.  $\Delta_t,\forall  t$. To  address this issue we need to identify another   state $\big\{ \Theta_{t}[\Delta_t]  | t =0, \ldots, T\big\}$, i.e., which satisfies a Markov recursion. This direction is pursued in \cite{CDC-GU-SD:CDC2026-ArXiv}, and leads to two additional DP equations. 
\end{remark}

\section{Gaps in the Proof of \cite[Theorems~5]{malikopoulosIEEEAC2023}}
\label{section:gaps}
The main technical gap in the proof of Theorem 5
originates from the use of the conditional
independence condition  (\ref{d-6}), which does not hold. We show  this below. 

Consider \cite[Appendix D, Proof of Theorem~5]{malikopoulosIEEEAC2023}. 

We recall\footnote{Below we follow the notation in \cite{malikopoulosIEEEAC2023}.}   \cite[eqn(112)]{malikopoulosIEEEAC2023}, i.e., (\ref{Mal_112}):
\begin{align}
&p^{{\bf g}}(X{t+1} \big| \Delta_{t+1}, \Lambda_{t+1}^k)     \label{Mal_112}\\
&=\frac{ p(Y_{t+1}^k\big|X_{t+1})     p^{{\bf g}}(X_{t+1} \big| \Delta_{t+1}, \Lambda_{t}^k, U_t^k)}{\int_{{\mathbb X}_{t+1}}  p(Y_{t+1}^k\big|X_{t+1})     p^{{\bf g}}(X_{t+1} \big| \Delta_{t+1}, \Lambda_{t}^k, U_t^k) dx_{t+1} }. \hst \forall t.\nonumber 
\end{align}
We also recall the statement leading to  \cite[ eqn(114)]{malikopoulosIEEEAC2023}, i.e., (\ref{Mal_114}):

{\it ``By Lemma~6, $p^{{\bf g}}(X_{t+1} \big| \Delta_{t+1}, \Lambda_{t}^k, U_t^k)$ depends only on the control strategy\footnote{To be precise, the  statement about dependence of the probabilities  only on   ${\bf g}^{-k}$, is not obvious at this stage. The standard method to show it (see  \cite{kumar-varayia:B1986}) is to first   derive the recursion  \cite[eqn(49)]{malikopoulosIEEEAC2023}, i.e., (\ref{Mal_49}),  and then  apply  induction using  the recursion  to show this statement.  Nevertheless, we ignore this technicality.} ${\bf g}^{-k}$, so we can write $p^{{\bf g}^{-k}}(X_{t+1} \big| \Delta_{t+1}, \Lambda_{t}^k, U_t^k)$. Next''}
\begin{align}
&p^{{\bf g}^{-k}}(X_{t+1} \big| \Delta_{t+1}, \Lambda_{t}^k, U_t^k)  \nonumber \\
&=\int_{{\mathbb  X}_t }p^{{\bf g}^{-k}}(X_{t+1} \big| X_t, \Delta_{t+1}, \Lambda_{t}^k, U_t^k) \nonumber \\
& .  p^{{\bf g}^{-k}}(X_{t} \big| \Delta_{t+1}, \Lambda_{t}^k, U_t^k)dX_{t}, \hst \forall t. \label{Mal_114}\
\end{align}
We will show  the following equalities which are used to derive the recursion, and  stated  in \cite[eqn(115)]{malikopoulosIEEEAC2023}, {\bf i.e., (\ref{Mal_115}),  is  incorrect}:
\begin{align}
&p^{{\bf g}^{-k}}(X_t \big| \Delta_{t+1}, \Lambda_t^k, U_t^k)= p^{{\bf g}^{-k}}(X_t \big| \Delta_{t}, Y_{t-n+1}^{1:K}, U_{t-n+1}^{1:K}, \nonumber \\
&\Lambda_t^k) = p^{{\bf g}^{-k}}(X_t \big| \Delta_{t},Y_{t-n+1}^{-k}, U_{t-n+1}^{-k}, \Lambda_t^k) \label{Mal_115a} \\
&{\bf \sr{?}{=}} p^{{\bf g}^{-k}}(X_t \big| \Delta_{t}, \Lambda_t^k). p^{{\bf g}^{-k}}(Y_{t-n+1}^{-k}, U_{t-n+1}^{-k}\big| \Delta_t,   \Lambda_t^k) \label{Mal_115}
\end{align}
{\it ``where $Y_{t-n+1}^{-k}=(Y_{t-n+1}^1, \ldots, Y_{t-n+1}^{k-1},Y_{t-n+1}^{k+1}, \ldots, Y_{t-n+1}^{K} )$, and $U_{t-n+1}^{-k}=(U_{t-n+1}^1, \ldots, U_{t-n+1}^{k-1},U_{t-n+1}^{k+1}, \ldots, Y_{t-n+1}^{K} )$. In the second equality, we dropped $Y_{t-n+1}^k, U_{t-n+1}^k$ from conditioning since they both  are included in $\Lambda_t^k$.''}

{\bf Next, we show the identity $\mbox{  (\ref{Mal_115a}) = (\ref{Mal_115})}$   is incorrect. } By Bayes' theorem, we have 
\begin{align}
\mbox{(\ref{Mal_115a})}
= \frac{  p^{{\bf g}^{-k}}(X_t,  \Delta_{t},Y_{t-n+1}^{-k}, U_{t-n+1}^{-k}, \Lambda_t^k)}{ p^{{\bf g}^{-k}}( \Delta_{t},Y_{t-n+1}^{-k}, U_{t-n+1}^{-k}, \Lambda_t^k)}\label{d-2}  
\end{align}
where the numerator is 
\begin{align}
&p^{{\bf g}^{-k}}(X_t,  \Delta_{t},Y_{t-n+1}^{-k}, U_{t-n+1}^{-k}, \Lambda_t^k)  \label{d-3}  \\
&= p^{{\bf g}^{-k}}(Y_{t-n+1}^{-k}, U_{t-n+1}^{-k}\big| X_t, \Delta_t,   \Lambda_t^k) \nonumber \\
&. p^{{\bf g}^{-k}}(X_t \big| \Delta_{t}, \Lambda_t^k). p^{{\bf g}^{-k}}( \Delta_{t}, \Lambda_t^k). \label{d-4}  
\end{align}
Substituting (\ref{d-4}) into (\ref{d-2}) we have, 
\begin{align}
&\mbox{(\ref{Mal_115a})}\nonumber \\
&=\frac{p^{{\bf g}^{-k}}(Y_{t-n+1}^{-k}, U_{t-n+1}^{-k}\big| X_t, \Delta_t,   \Lambda_t^k). p^{{\bf g}^{-k}}(X_t \big| \Delta_{t}, \Lambda_t^k) }{\int_{{\mathbb  X}_t}   p^{{\bf g}^{-k}}(Y_{t-n+1}^{-k}, U_{t-n+1}^{-k}\big| X_t, \Delta_t,   \Lambda_t^k). p^{{\bf g}^{-k}}(X_t \big| \Delta_{t}, \Lambda_t^k)   }.  \label{d-5}  
\end{align}
From (\ref{d-5}) we deduce that identity $\mbox{  (\ref{Mal_115a}) = (\ref{Mal_115})}$ holds if and only if  the conditional independence condition holds: 
\begin{align}
& p^{{\bf g}^{-k}}(Y_{t-n+1}^{-k}, U_{t-n+1}^{-k} \big| X_t, \Delta_t,   \Lambda_t^k)  \nonumber \\
& =p^{{\bf g}^{-k}}(Y_{t-n+1}^{-k}, U_{t-n+1}^{-k}\big| \Delta_t,   \Lambda_t^k),  \hst \forall t, \; \forall k \label{d-6}   \\
&\mbox{Equivalently the Markov chain holds:}\nonumber \\
&X_t \leftrightarrow (\Delta_t,   \Lambda_t^k) \leftrightarrow Y_{t-n+1}^{-k}, U_{t-n+1}^{-k}, \hso \forall t, \: \forall k \label{d-7}  
\end{align} 
{\bf Therefore, it suffices to  show (\ref{d-6}) 
 or equivalently  (\ref{d-7}) 
  do not hold. } To this end, consider   for simplicity,  $K=2, n=1$. Then by  definition,  $\Delta_t=\{Y_{0, t-1}^1,U_{0, t-1}^1,  Y_{0, t-1}^2, U_{0, t-1}^2\}, \Lambda_t^1=\{ Y_{t}^1\},\Lambda_t^2=\{ Y_{t}^2\}$. We check whether  conditional independence  (\ref{d-6}) holds:  
\begin{align}
& p^{{\bf g}^{-k}}(Y_{t-n+1}^{-k}, U_{t-n+1}^{-k}\big| X_t,  \Delta_t,   \Lambda_t^k)\Big|_{K=2,k=1,  n=1}   \label{d-8}  \\
&=p^{{\bf g}^{2}}(Y_t^2, U_t^2\big| X_t,  \{Y_{0, t-1}^1,  U_{0, t-1}^1, Y_{0, t-1}^2, U_{0,t-1}^2\}, Y_{t}^1)  \label{d-9}   \\
& \sr{(?)}{=} p^{{\bf g}^{2}}(Y_t^2, U_t^2\big|  \{Y_{0, t-1}^1, U_{0,t-1}^1,  Y_{0, t-1}^2, U_{0,t-1}^2\}, Y_t^1). \label{d-10} 
\end{align} 
{\bf Clearly, equality (\ref{d-10}) does not hold,}  because  by  (\ref{NDM-4}),   $Y_{t}^2   = h_{t}^2(X_{t}, Z_{t}^2), \forall t$, i.e., in (\ref{d-9}), the  conditioning variables  $\Delta_t=\{Y_{0, t-1}^1,  U_{0, t-1}^1, Y_{0, t-1}^2, U_{0,t-1}^2\}, \Lambda_t^1=\{ Y_{t}^1\}$   do not specify $X_t$. 
Hence, the  conditional independence condition  (\ref{d-6})  does not hold, and therefore  {\bf identity $\mbox{  (\ref{Mal_115a}) = (\ref{Mal_115})}$ is incorrect} (for equality to hold it is necessary and sufficient that the  conditional independence holds).

\ \

\begin{remark} In view of the errors regarding  
\cite[Theorem~5]{malikopoulosIEEEAC2023},
then  Lemma~7, Corollary~1, Lemma~8, Theorem~6, and Theorem~7 in \cite{malikopoulosIEEEAC2023} are also affected because they make use of Theorem~5 in \cite{malikopoulosIEEEAC2023}.
\end{remark}

\section{Conclusion}
In this paper, we have identified several critical gaps in  the proofs of \cite[Theorems 5, 7]{malikopoulosIEEEAC2023}. These gaps undermine the validity of the results presented in
\cite[Section~IV.B]{malikopoulosIEEEAC2023}, particularly those concerning decentralized DP based on  PbP optimality, and their consequences. We  have also presented the correct DP equations for PbP optimality, as recently developed in \cite{CDC-GU-SD:CDC2026-ArXiv,CDC:ECC2025}. The compression of $\{\Delta_t|t=0,1,\ldots, T\}$ via a Markov  information state,  (i.e., which  satisfies a Markov  recursion)  is developed in \cite{CDC-GU-SD:CDC2026-ArXiv}.

\bibliographystyle{IEEEtran}

\bibliography{Bibliography_Decentralized_arxiv_fixed}

\end{document}